\newtheorem{Th}{Theorem}[section]
\newtheorem{Ob}[Th]{Remark}
\newtheorem{Cor}[Th]{Corollary}
\newtheorem{Ej}[Th]{Example}
\newtheorem{Pro}[Th]{Proposition}
\newtheorem{Le}[Th]{Lemma}
\title[Distributions and Hamiltonians on ${\rm SL}(n,{\mathbb R})$ induced by its Killing form]{Left-invariant distributions and metric Hamiltonians on ${\rm SL}(n,{\mathbb R})$ induced by its Killing form}
\author[Abraham Bobadilla Osses and Mauricio Godoy Molina]{Abraham Bobadilla Osses$^{1,2}$ and Mauricio Godoy Molina$^2$}
\address{$^1$ Facultad de Ingenier\'ia, Universidad Aut\'onoma de Chile – Sede Temuco, Av. Alemania 1090 Temuco, Chile.}
\email{abraham.bobadilla@uautonoma.cl}
\address{$^2$ Departamento de Matem\'atica y Estad\'istica, Universidad de La Frontera, Av. Francisco Salazar 01145 Temuco, Chile.}
\email{abraham.bobadilla@ufrontera.cl, mauricio.godoy@ufrontera.cl}
\subjclass[2010]{53C50, 17B20, 15B30, 53C17}
\keywords{Sub-pseudo-Riemannian geometry, special linear Lie algebra, special linear Lie group, Hamiltonian formalism}
\begin{document}
\maketitle
\begin{abstract}
From the classical theory of Lie algebras, it is well-known that the bilinear form $B(X,Y)={\rm tr}(XY)$ defines a non-degenerate scalar product on the simple Lie algebra ${\mathfrak{sl}}(n,{\mathbb R})$. Diagonalizing the Gram matrix $Gr$ associated with this scalar product we find a basis of ${\mathfrak{sl}}(n,{\mathbb R})$ of eigenvectors of $Gr$ which produces a family of bracket generating distributions on ${\rm SL}(n,{\mathbb R})$. Consequently, the bilinear form $B$ defines sub-pseudo-Riemannian structures on these distributions. Each of these geometric structures naturally carries a metric quadratic Hamiltonian. In the present paper, we construct in detail these manifolds, study Poisson-commutation relations between different Hamiltonians, and present some explicit solutions of the corresponding Hamiltonian system for $n=2$.
\end{abstract}

\section{Introduction}

For several decades, sub-Riemannian geometry has been a source of very interesting mathematics combining differential geometry, geometric control theory, mathematical physics, geometric analysis, and Lie theory, see \cite{ABB,M}. The most important example studied in this area is the three-dimensional Heisenberg group, see \cite{CDPT}, which has become the quintessential sub-Riemannian manifold. It is natural, therefore, that it was one of the first manifolds endowed with a sub-Lorentzian structure, see \cite{G1}. Afterwards, the study of non-degenerate metric structures on bracket-generating distributions has become a topic of significant interest in geometric analysis, for example see \cite{BFM, BT,GKM,G2,G3,GV,PP,SS1,SS2}. These manifolds have been called sub-pseudo-Riemannian in the literature.

In order to understand deeply the behavior of sub-Riemannian metric structures, the classical low-dimensional real Lie groups have been important examples, see \cite{BZ1,BZ2,BS,B,BR,DAC,VG}, not only because many complicated computations can be performed explicitly, but due to their ubiquity in many areas of mathematics. Reference \cite{GV} is of special interest for the present paper, where the Lie group ${\rm SU}(1,1)$ is endowed with two metric structures, a sub-Riemannian and a sub-Lorentzian one. Since ${\rm SU}(1,1)\cong{\rm SL}(2,{\mathbb R})$, some of the results in the present paper can be considered generalizations of the sub-Lorentzian geometric results obtained in the aforementioned reference.

In this article we study a family of sub-pseudo-Riemannian metrics on ${\rm SL}(n,{\mathbb R})$, $n\geq2$, arising naturally from its Killing form. The initial steps of the construction follow from elementary linear algebra, but they have to be done carefully not to make the computations unnecessarily complicated. The argument provides a Lie algebra generating set of ${\mathfrak{sl}}(n,{\mathbb R})$, that is, a set of vectors that together with Lie brackets between them generate ${\mathfrak{sl}}(n,{\mathbb R})$ as a vector space. This set, which is formed of eigenvectors of the Gram matrix of the Killing form, is endowed naturally with a non-degenerate metric, and thus it produces sub-pseudo-Riemannian structures on ${\rm SL}(n,{\mathbb R})$, defined on a family of bracket generating distributions obtained by left translating the generating vectors. For these manifolds, we study the standard metric quadratic Hamiltonians and show that some of them Poisson commute. Considering that the algebraic expressions, and thus the associated Hamiltonian system, become increasingly cumbersome in higher dimensions, we present a few conserved quantities for $n=2$ and find some explicit solutions to the system.

The paper is organized as follows. After a brief summary of preliminaries in Section \ref{sec:prelim}, we present the construction of the family of bracket generating distributions ${\mathscr D}_{\mathcal I}$ on ${\rm SL}(n,{\mathbb R})$ in Section \ref{sec:framework}. These are the distributions that define the main objects of this article. In Section \ref{sec:ham} we study the corresponding Hamiltonian functions defined by the restriction of the Killing form of ${\mathfrak{sl}}(n,{\mathbb R})$. After showing a general result concerning the Poisson commutativity of some of these Hamiltonians, we study in more depth the ${\rm SL}(2,{\mathbb R})$ case, in which the construction of the previous section simplifies significantly.

\section{Preliminaries}\label{sec:prelim}

\subsection{Concerning ${\rm SL}(n,{\mathbb R})$}\label{ssec:sl}

Let $n\geq2$ be an integer. We denote by $G={\rm SL}(n,{\mathbb R})$ the Lie group of all matrices of size $n\times n$ of determinant one. It is well-known that its Lie algebra ${\mathfrak{g}}={\mathfrak{sl}}(n,{\mathbb R})$ consists of all traceless matrices of size $n\times n$. A standard result on the theory of Lie algebras is that ${\mathfrak{g}}$ is simple, that is, contains no nonzero proper ideals and it is not abelian. In the case of real Lie algebras, simplicity is equivalent to the fact that the Killing form
\[
\kappa(X,Y)={\rm tr}({\rm ad}_X\circ{\rm ad}_Y),\quad X,Y\in{\mathfrak{g}}, 
\]
is a non-degenerate bilinear form. As usual, the adjoint representation ${\rm ad}\colon{\mathfrak{g}}\to{\rm End}({\mathfrak{g}})$ is given by ${\rm ad}_X(Z)=[X,Z]=XZ-ZX$. In the specific case of ${\mathfrak{g}}$, the Killing form $\kappa$ is simply
\[
\kappa(X,Y)=2n\cdot{\rm tr}(XY).
\]
For more details, the interested reader can consult \cite{H1,H2}.

Let ${\rm id}_n\in G$ denote the $n\times n$ identity matrix. Given a tangent vector $v\in T_{{\rm id}_n}G\cong{\mathfrak{g}}$, we obtain a left-invariant vector field $V\in{\mathfrak{X}}(G)$ by left translation, that is, we have
\[
V(x)=d_{{\rm id}_n}L_x(v),\quad x\in G,
\]
where $L_x\colon G\to G$ is the smooth function given by $L_x(h)=xh$. Since $G$ is a matrix Lie group, the previous rather abstract definition becomes $V(x)=xv$.

\subsection{Non-degenerate metrics on bracket generating distributions}\label{ssec:bg}

Given a smooth manifold $M$, a distribution $D\hookrightarrow TM$ is a vector sub-bundle of the tangent bundle $TM$. The celebrated Chow-Rashevski{\u\i} theorem, see \cite{C,R} for the original and \cite{M} for the modern formulation, states that if $M$ is connected and the sections of $D$ together with all possible iterated Lie brackets of sections of $D$ span the entire tangent space at every point, then for any two points $x_0,x_1\in M$ there exists an absolutely continuous curve $\gamma\colon[0,1]\to M$ such that
\begin{equation}\label{eq:hor}
\gamma(0)=x_0,\quad\gamma(1)=x_1\quad\mbox{and}\quad\dot\gamma(t)\in D|_{\gamma(t)}\quad\mbox{a.e.}
\end{equation}
These types of distributions are called bracket generating and a curve $\gamma$ satisfying \eqref{eq:hor} is called horizontal or admissible.

Given a distribution $D$ on a smooth manifold, we can endow $D$ with a non-degenerate metric tensor $g$, that is, a non-degenerate bilinear form at each point $q\in M$
\[
g_q\colon D_q\times D_q\to{\mathbb R}
\]
smoothly varying with respect to $q$. If the metric is positive definite, then one can define the so-called Carnot-Carath\'eodory distance $d_{CC}$ by
\begin{equation}\label{eq:CC}
d_{CC}(x_0,x_1)=\inf\left\{\int_0^1g(\dot\gamma(t),\dot\gamma(t))^{1/2}dt\colon\gamma\mbox{ satisfies \eqref{eq:hor}}\right\}.
\end{equation}
If the distribution $D$ is bracket generating and the manifold $M$ is connected, then $d_{CC}(x_0,x_1)$ is finite for any pair of points $x_0,x_1\in M$, see \cite{ABB,M}.

In the case the metric is not positive definite, equality \eqref{eq:CC} does not make sense in general. In order to determine an interesting set of curves, it is usual to generalize the Hamiltonian formulation of the minimization problem \eqref{eq:CC}. Assuming that that $g$ is positive definite and that the distribution $D$ is generated locally by $k$ linearly independent $g$-orthonormal vector fields $V_1,\dotsc,V_k$, then for any horizontal curve $\gamma$ there are piecewise smooth functions $f_1,\dotsc,f_k$ such that
\[
\dot\gamma=\sum_{i=1}^kf_iV_i\quad\mbox{and thus}\quad g(\dot\gamma,\dot\gamma)=\sum_{i=1}^kf_i^2.
\]
In these terms, the metric quadratic Hamiltonian $H\colon T^*M\to{\mathbb R}$ corresponding to the minimization problem \eqref{eq:CC} is
\[
H(q;\lambda)=\frac12\sum_{i=1}^kP_{V_i}(q;\lambda)^2,\quad\lambda\in T_q^*M,
\]
where $P_V(q;\lambda)=\lambda(V(q))$ is the so-called momentum function of $V$. In the general case, regardless of whether  vector fields $V_1,\dotsc,V_k$ are $g$-orthonormal, but always assuming they form a basis, then the corresponding Hamiltonian is
\begin{equation}\label{eq:Hamnonorth}
H(q;\lambda)=\frac12\sum_{i=1}^kg^{ij}(q)P_{V_i}(q;\lambda)P_{V_j}(q;\lambda),\quad\lambda\in T_q^*M,
\end{equation}
where $(g^{ij}(q))$ denotes the inverse of the metric at $q$. The solutions to the Hamiltonian system associated to $H$ are called normal geodesics in sub-Riemannian geometry, see \cite{M}.

The general definition \eqref{eq:Hamnonorth} makes sense even in the non-degenerate non-positive definite case, which is the situation that is most interesting for the present paper. In this context, it is usual to say that the metric $g$ has an index $r>0$, which is the largest dimension of a negative definite subspace of $D$.


\section{General framework}\label{sec:framework}

Consider the symmetric bilinear form $B=\frac1{2n}\kappa\colon{\mathfrak{g}}\times{\mathfrak{g}}\to{\mathbb R}$. As mentioned in Subsection \ref{ssec:sl}, the bilinear form $B$ is non-degenerate, since it is a (non-zero) scalar multiple of the Killing form. First we need to compute the Gram matrix of $B$ using a suitable ordered basis of ${\mathfrak{g}}$. Define the following elements of ${\mathfrak{g}}$:
\begin{eqnarray}\label{eq:mat}
&e_1^1=E_{1,2},\quad e_1^2=E_{2,1}\quad\dotsc\quad e_{n-1}^1=E_{1,n},\quad e_{n-1}^2=E_{n,1}&\nonumber\\
&e_n^1=E_{2,3},\quad e_n^2= E_{3,2}\quad\dotsc\quad e_{2n-3}^1=E_{2,n},\quad e_{2n-3}^2=E_{n,2}&\nonumber\\
&\vdots&\\
&e_{\frac{n(n-1)}2}^1=E_{n-1,n},\quad e_{\frac{n(n-1)}2}^2=E_{n,n-1},&\nonumber\\
&e_1=E_{n,n}-E_{1,1}\quad \dotsc\quad e_{n-1}=E_{n,n}-E_{n-1,n-1}.&\nonumber
\end{eqnarray}
where, as usual, the $n\times n$ matrix $E_{i,j}$ has a 1 in the position $(i,j)$ and the rest of entries is zero. It is an elementary exercise to check that the set
\begin{equation}\label{eq:basis}
{\mathscr B}=\left\{e_1^1,e_1^2,\dotsc,e_{\frac{n(n-1)}2}^1,e_{\frac{n(n-1)}2}^2\right\}\cup\{e_1,\dotsc,e_{n-1}\}
\end{equation}
is a basis for ${\mathfrak{g}}$. Moreover, we have that $(e_r^1)^\top=e_r^2$ for $r=1,\dotsc,\frac{n(n-1)}2$ and $(e_\ell)^\top=e_\ell$ for $\ell=1,\dotsc,n-1$.

Recall that the Gram matrix of a bilinear form $T\colon W\times W\to{\mathbb R}$ on a $d$-dimensional real vector space $W$ with respect to an ordered basis $\{w_1,\dotsc,w_d\}$ is the $d\times d$ matrix $(T(w_i,w_j))_{1\leq i,j\leq d}$. Considering the ordered basis \eqref{eq:basis}, we can provide an explicit description of the Gram matrix of $B$. This description will play an important role for finding the subspaces of ${\mathfrak{g}}$ that will be of interest in this article. 

For convenience, given a positive integer $m\in{\mathbb N}$, let us denote by ${\mathcal P}_m=(p_{i,j})$ the $m\times m$ matrix given by
\begin{equation}\label{eq:Pm}
p_{i,j}=\begin{cases}2,&\mbox{if }i=j,\\1,&\mbox{if }i\neq j.\end{cases}
\end{equation}
Matrices of this form will appear several times in what follows. It is a simple exercise to invert matrix \eqref{eq:Pm} to obtain ${\mathcal P}_m^{-1}=(p^{i,j})$, where
\[
p^{i,j}=\begin{cases}\frac{m}{m+1},&\mbox{if }i=j,\\-\frac1{m+1},&\mbox{if }i\neq j.\end{cases}
\]

With all this notation at hand, we can describe precisely the Gram matrix of $B$.

\begin{Pro}
The Gram matrix $Gr$ of the bilinear form $B$ in the ordered basis ${\mathscr{B}}$ is the $(n^2-1)\times(n^2-1)$ block diagonal matrix
\[
Gr=\begin{pmatrix}J&0&0&\cdots&0&0\\0&J&0&\cdots&0&0\\0&0&J&\cdots&0&0\\\vdots&\vdots&\vdots&\ddots&\vdots&\vdots\\0&0&0&\cdots&J&0\\0&0&0&\cdots&0&{\mathcal P}_{n-1}\end{pmatrix}
\]
in which the $2\times 2$ block $J=\begin{pmatrix}0&1\\1&0\end{pmatrix}$ appears $\dfrac{n(n-1)}2$ times.
\end{Pro}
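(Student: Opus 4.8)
The plan is to compute the entries $B(v,w)$ for all pairs $v,w$ drawn from the ordered basis ${\mathscr B}$ using the explicit formula $B(X,Y)={\rm tr}(XY)$, and to organize these computations according to the two types of basis vectors: the off-diagonal generators $e_r^1=E_{i,j}$, $e_r^2=E_{j,i}$ (with $i<j$), and the diagonal generators $e_\ell=E_{n,n}-E_{\ell,\ell}$. Since $B$ is symmetric and the basis is ordered so that each pair $(e_r^1,e_r^2)$ appears consecutively, followed by the $n-1$ diagonal elements, it suffices to verify four groups of identities, after which the block structure is immediate.

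First I would handle the products of elementary matrices: recall $E_{a,b}E_{c,d}=\delta_{b,c}E_{a,d}$, so ${\rm tr}(E_{a,b}E_{c,d})=\delta_{b,c}\delta_{a,d}$. This single fact does most of the work. For two off-diagonal generators $e_p^{\alpha}=E_{i,j}$ and $e_q^{\beta}=E_{k,\ell}$, the trace ${\rm tr}(E_{i,j}E_{k,\ell})=\delta_{j,k}\delta_{i,\ell}$ is nonzero precisely when $(k,\ell)=(j,i)$, i.e.\ when the two matrices are the transpose pair $\{E_{i,j},E_{j,i}\}$ coming from the \emph{same} index pair, in which case the value is $1$. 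In particular $B(e_p^1,e_p^1)={\rm tr}(E_{i,j}^2)=0$ and $B(e_p^1,e_p^2)={\rm tr}(E_{i,j}E_{j,i})=1$, which produces exactly the block $J=\left(\begin{smallmatrix}0&1\\1&0\end{smallmatrix}\right)$ on the diagonal for each of the $\frac{n(n-1)}2$ index pairs, and forces all off-diagonal generators belonging to distinct index pairs to be mutually $B$-orthogonal, giving the block-diagonal structure among the $J$-blocks.

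Next I would treat the mixed terms $B(e_p^{\alpha},e_\ell)$. Here $e_p^{\alpha}=E_{i,j}$ with $i\neq j$ and $e_\ell=E_{n,n}-E_{\ell,\ell}$, so $E_{i,j}(E_{n,n}-E_{\ell,\ell})=\delta_{j,n}E_{i,n}-\delta_{j,\ell}E_{i,\ell}$, whose trace is $\delta_{j,n}\delta_{i,n}-\delta_{j,\ell}\delta_{i,\ell}$; since $i\neq j$ both Kronecker products vanish, so every off-diagonal generator is $B$-orthogonal to every diagonal generator, which confirms the zero blocks in the last row and column. Finally, for two diagonal generators, $(E_{n,n}-E_{p,p})(E_{n,n}-E_{q,q})=E_{n,n}-\delta_{p,n}\cdots$ — more directly, using linearity of the trace and ${\rm tr}(E_{a,a}E_{b,b})=\delta_{a,b}$, one gets $B(e_p,e_q)={\rm tr}(E_{n,n})-{\rm tr}(E_{n,n}E_{q,q})-{\rm tr}(E_{p,p}E_{n,n})+{\rm tr}(E_{p,p}E_{q,q})=1-0-0+\delta_{p,q}$, since $p,q\le n-1$ are distinct from $n$. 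Hence $B(e_p,e_q)=2$ if $p=q$ and $1$ otherwise, which is precisely ${\mathcal P}_{n-1}$, the last diagonal block.

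Assembling these four computations, every basis vector from one $J$-block is orthogonal to every basis vector from any other $J$-block and to every diagonal generator, the pairing within a $J$-block is $J$ itself, and the pairing among the diagonal generators is ${\mathcal P}_{n-1}$; this is exactly the asserted block-diagonal form. I do not anticipate a genuine obstacle here — the argument is a direct bookkeeping exercise — but the one point requiring care is keeping the indexing of ${\mathscr B}$ in \eqref{eq:mat} aligned with the claimed ordering of blocks, so that the consecutive pairs $(e_r^1,e_r^2)$ really do sit in $2\times2$ diagonal blocks and the diagonal generators $e_1,\dots,e_{n-1}$ are grouped together at the end; once the ordering is fixed, the vanishing of all cross terms follows mechanically from ${\rm tr}(E_{a,b}E_{c,d})=\delta_{b,c}\delta_{a,d}$ together with the constraint $i\neq j$ for off-diagonal generators.
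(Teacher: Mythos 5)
Your proof is correct and follows essentially the same route as the paper: a direct computation of all pairwise values $B(v,w)$ over the ordered basis, separating the off-diagonal generators (yielding the $J$-blocks via ${\rm tr}(E_{i,j}E_{j,i})=1$ and all other traces vanishing), the mixed terms (vanishing since $i\neq j$), and the diagonal generators (yielding $1+\delta_{p,q}$, i.e.\ ${\mathcal P}_{n-1}$). The only cosmetic difference is that you make the identity ${\rm tr}(E_{a,b}E_{c,d})=\delta_{b,c}\delta_{a,d}$ explicit, whereas the paper computes the matrix products $e_ke_\ell$ directly before taking traces.
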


\begin{proof}
This follows from direct computations. If $r,s=1,\dotsc,\frac{n(n-1)}2$ and $\ell=1,\dotsc,n-1$, then it is easy to see that
\[
B(e_r^1,e_s^1)=B(e_r^2,e_s^2)=B(e_r^1,e_\ell)=B(e_r^2,e_\ell)=0,\quad B(e_r^1,e_s^2)=\begin{cases}1,&\mbox{if }r=s,\\0,&\mbox{else.}\end{cases}
\]
This explains the $\frac{n(n-1)}2$ blocks $J$ in $Gr$ and all the off-block-diagonal zeros. Concerning the $(n-1)\times(n-1)$ block $P_{n-1}$, it can be determined directly from the fact that
\[
e_k e_\ell=(E_{k,k}-E_{n,n})(E_{\ell,\ell}-E_{n,n})=\begin{cases}E_{k,k}+E_{n,n}&\mbox{if }k=\ell,\\E_{n,n}&\mbox{else,}\end{cases}
\]
for $k,\ell=1,\dotsc,n-1$. As a consequence
\begin{equation*}
B(e_k,e_\ell)=\begin{cases}{\rm tr}(E_{k,k}+E_{n,n})=2&\mbox{if }k=\ell,\\{\rm tr}(E_{n,n})=1&\mbox{else.}\end{cases}\qedhere
\end{equation*}
\end{proof}

\begin{Cor}\label{cor:vecprop}
The spectrum of $Gr$ is given by
\[
\sigma(Gr)=\{n,1,-1\},
\]
where the respective eigenspaces $E_\lambda$, $\lambda\in\sigma(Gr)$, have dimensions
\[
\dim E_{n}=1,\quad\dim E_{1}=\frac{n(n-1)}2+n-2,\quad\dim E_{-1}=\frac{n(n-1)}2.
\]
\end{Cor}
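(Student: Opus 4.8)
The plan is to exploit the block-diagonal form of $Gr$ established in the previous proposition: the spectrum of a block-diagonal matrix is the union of the spectra of its diagonal blocks, and the algebraic (here geometric, since every block turns out to be diagonalizable) multiplicity of an eigenvalue is the sum of its multiplicities in the individual blocks. So I would reduce the computation to diagonalizing the two types of blocks — the $2\times2$ block $J$ and the $(n-1)\times(n-1)$ block ${\mathcal P}_{n-1}$ — separately, and then keep track of multiplicities.

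First I would handle the $2\times2$ block $J=\begin{pmatrix}0&1\\1&0\end{pmatrix}$. Its characteristic polynomial is $\lambda^2-1$, so $\sigma(J)=\{1,-1\}$, each eigenvalue being simple with eigenvectors $(1,1)^\top$ and $(1,-1)^\top$, respectively. Since $J$ appears $\frac{n(n-1)}2$ times along the diagonal of $Gr$, these blocks together contribute $\frac{n(n-1)}2$ to the multiplicity of $1$ and $\frac{n(n-1)}2$ to the multiplicity of $-1$.

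Next I would treat the block ${\mathcal P}_{n-1}$ by writing it as ${\mathcal P}_{n-1}=I_{n-1}+uu^\top$, where $u=(1,\dots,1)^\top\in{\mathbb R}^{n-1}$; this matches the entries in \eqref{eq:Pm}. The rank-one matrix $uu^\top$ has eigenvalue $\|u\|^2=n-1$ on the line ${\mathbb R}u$ and eigenvalue $0$ on the hyperplane $u^\perp$ of dimension $n-2$. Adding $I_{n-1}$ shifts both eigenvalues by $1$, so ${\mathcal P}_{n-1}$ has eigenvalue $n$ with multiplicity $1$ (eigenvector $u$) and eigenvalue $1$ with multiplicity $n-2$ (eigenspace $u^\perp$). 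One could equally well read this off the explicit formula for ${\mathcal P}_{n-1}^{-1}$ recorded before the proposition, whose eigenvalues $\frac1n$ and $1$ invert to $n$ and $1$.

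Putting the two computations together yields $\sigma(Gr)=\{n,1,-1\}$ with the stated dimensions, and as a consistency check one verifies $1+\left(\frac{n(n-1)}2+n-2\right)+\frac{n(n-1)}2=n^2-1=\dim{\mathfrak g}$. There is no genuine obstacle here; the only point requiring a little care is the combinatorial bookkeeping, namely that the eigenvalue $1$ of $Gr$ collects contributions both from all of the $J$-blocks and from ${\mathcal P}_{n-1}$.
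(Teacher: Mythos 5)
Your proof is correct and follows essentially the same route as the paper: the paper's proof also works blockwise, simply exhibiting the eigenvectors $X_r=e_r^1+e_r^2$ and $Y_r=e_r^1-e_r^2$ for the $J$-blocks and $H=e_1+\cdots+e_{n-1}$, $Z_\ell=e_\ell-e_{n-1}$ for the ${\mathcal P}_{n-1}$-block, which are exactly the vectors your diagonalization of $J$ and of ${\mathcal P}_{n-1}=I_{n-1}+uu^\top$ produces. Your version has the minor merit of making explicit both the rank-one structure behind the eigenvalues $n$ and $1$ of ${\mathcal P}_{n-1}$ and the dimension count showing the listed eigenspaces exhaust ${\mathfrak g}$.
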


\begin{proof}
It is enough to provide a basis for each eigenspace. 
\begin{description}
\item[Basis for $E_{n}$] The matrix 
\[
H=e_1+\cdots+e_{n-1}=(n-1)E_{n,n}-E_{1,1}-\cdots-E_{n-1,n-1}
\]
corresponds to an eigenvector of $Gr$ with eigenvalue $n$.

\item[Basis for $E_{1}$] The linearly independent matrices
\[
X_1=e_1^1+e_1^2,\dotsc,X_{\frac{n(n-1)}2}=e_{\frac{n(n-1)}2}^1+e_{\frac{n(n-1)}2}^2,
\]
\[
Z_1=e_1-e_{n-1},\dotsc,Z_{n-2}=e_{n-2}-e_{n-1}
\]
correspond to $\frac{n(n-1)}2+n-2$ eigenvectors of $Gr$ with eigenvalue 1. Evidently, the vectors $Z_\ell$ only appear when $n\geq3$.

\item[Basis for $E_{-1}$] The linearly independent matrices
\[
Y_1=e_1^1-e_1^2,\dotsc,Y_{\frac{n(n-1)}2}=e_{\frac{n(n-1)}2}^1-e_{\frac{n(n-1)}2}^2
\]
correspond to $\frac{n(n-1)}2$ eigenvectors of $Gr$ with eigenvalue $-1$.\qedhere
\end{description}
\end{proof}

The goal now is to show that the smallest Lie subalgebra of ${\mathfrak{g}}$ containing the linearly independent set 
\[
{\mathscr{C}}=\left\{X_1,\dotsc,X_{\frac{n(n-1)}2},Y_1,\dotsc,Y_{\frac{n(n-1)}2}\right\}
\]
is the full algebra ${\mathfrak{g}}$. We say that ${\mathscr{C}}$ is a Lie algebra generating set for ${\mathfrak{g}}$. To show this, we need the following elementary formula, see \cite[p. 2]{H2}.

\begin{Le}\label{lem:comm}
Let $i,j,k,\ell\in\{1,\dotsc,n\}$, then
\[
[E_{i,j},E_{k,\ell}]=E_{i,j}E_{k,\ell}-E_{k,\ell}E_{i,j}=\begin{cases}E_{i,\ell}&\mbox{if }i\neq\ell,j=k,\\-E_{k,j}&\mbox{if }i=\ell,j\neq k,\\E_{i,i}-E_{j,j}&\mbox{if }i=\ell,j=k,\\0&\mbox{else.}\end{cases}
\]
\end{Le}

As a consequence of this formula, we can state the following algebraic fact.

\begin{Th}\label{th:bg}
The collection ${\mathscr{C}}$ of traceless matrices is a Lie algebra generating set for ${\mathfrak{g}}$.
\end{Th}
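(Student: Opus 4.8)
The strategy is to show that iterated brackets of the $X_r$ and $Y_r$ recover each basis element $E_{i,j}$ with $i\neq j$, together with enough diagonal elements to span the Cartan subalgebra, which together span all of ${\mathfrak g}$ since $\dim{\mathfrak g}=n^2-1$ and these are exactly the matrix units $E_{i,j}$ ($i\neq j$) plus an $(n-1)$-dimensional diagonal space. First I would record the basic observation that for each off-diagonal pair $(a,b)$ the set ${\mathscr C}$ contains (up to reindexing) a vector of the form $X=E_{a,b}+E_{b,a}$ and a vector $Y=E_{a,b}-E_{b,a}$; hence $\tfrac12(X+Y)=E_{a,b}$ and $\tfrac12(X-Y)=E_{b,a}$ already lie in the span of ${\mathscr C}$. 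In other words, every matrix unit $E_{i,j}$ with $|i-j|$ arbitrary and $i\neq j$ is in $\mathrm{span}({\mathscr C})$, with no brackets needed — the only thing genuinely missing from $\mathrm{span}({\mathscr C})$ is the diagonal (Cartan) part, which has dimension $n-1$.

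So the real content is to produce the $n-1$ missing diagonal directions via brackets. Using Lemma \ref{lem:comm} with $i=\ell$, $j=k$, we get $[E_{a,b},E_{b,a}]=E_{a,a}-E_{b,b}$ for any $a\neq b$. Since both $E_{a,b}$ and $E_{b,a}$ are in $\mathrm{span}({\mathscr C})$ by the previous paragraph, the bracket $E_{a,a}-E_{b,b}$ lies in the Lie subalgebra generated by ${\mathscr C}$. The family $\{E_{a,a}-E_{b,b}: a\neq b\}$ obviously spans the full diagonal traceless subalgebra (e.g. $\{E_{1,1}-E_{k,k}: k=2,\dots,n\}$ is already a basis of it). Therefore the Lie subalgebra generated by ${\mathscr C}$ contains all off-diagonal matrix units $E_{i,j}$ and the whole diagonal traceless subspace, hence all of ${\mathfrak g}$.

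The one point requiring a little care — and the step I would treat as the main obstacle, though it is more bookkeeping than difficulty — is matching the indices in the definition \eqref{eq:mat}: one must check that the enumeration $e_r^1=E_{i,j}$, $e_r^2=E_{j,i}$ runs over \emph{every} unordered pair $\{i,j\}$ with $i<j$ exactly once, so that indeed for each such pair both $E_{i,j}+E_{j,i}$ and $E_{i,j}-E_{j,i}$ appear among the $X_r,Y_r$. This is immediate from the display \eqref{eq:mat}, where the pairs are listed in the order $(1,2),(1,3),\dots,(1,n),(2,3),\dots,(n-1,n)$, i.e. all $\binom{n}{2}=\tfrac{n(n-1)}{2}$ of them. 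Once that identification is in place, the argument above is complete and, in fact, shows the stronger statement that a single round of brackets suffices: ${\mathscr C}$ together with brackets $[X_r,Y_r]$ already spans ${\mathfrak g}$.
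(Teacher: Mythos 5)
Your proof is correct and follows essentially the same route as the paper: the span of the $X_r,Y_r$ already contains every off-diagonal matrix unit, and the single commutator identity $[E_{p,q},E_{q,p}]=E_{p,p}-E_{q,q}$ (equivalently $[X_r,Y_r]=2(E_{q,q}-E_{p,p})$) fills in the $(n-1)$-dimensional traceless diagonal part. The only cosmetic difference is that the paper targets the specific complementary vectors $H$ and $Z_\ell$ directly, whereas you generate the whole Cartan subalgebra; both arguments use one round of brackets and the same lemma.
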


\begin{proof}
Since the subspace
\[
{\mathscr{D}}_{XY}={\rm span}\left\{X_1,\dotsc,X_{\frac{n(n-1)}2},Y_1,\dotsc,Y_{\frac{n(n-1)}2}\right\}
\]
of ${\mathfrak{g}}$ has codimension $n-1$, it is enough to show that $Z_1,\dotsc,Z_{n-2}$ and $H$ can be written in terms of Lie brackets of elements of ${\mathscr{D}}_{XY}$.

First notice that if $X_r=E_{p,q}+E_{q,p}$ and $Y_r=E_{p,q}-E_{q,p}$ according to the renumbering \eqref{eq:mat}, where $1\leq p<q\leq n$, then following Lemma \ref{lem:comm} we have
\[
[X_r,Y_r]=[E_{p,q}+E_{q,p},E_{p,q}-E_{q,p}]=[E_{q,p},E_{p,q}]-[E_{p,q},-E_{q,p}]=2(E_{q,q}-E_{p,p}).
\]
The vector $H$ can be easily obtained as
\begin{equation*}
\sum_{i=1}^{n-1}[E_{i,n}+E_{n,i},E_{i,n}-E_{n,i}]=2\sum_{i=1}^{n-1}(E_{n,n}-E_{i,i})=2H\in[{\mathscr{D}}_{XY},{\mathscr{D}}_{XY}].
\end{equation*}
If $n=2$, we are done. In the case that $n\geq3$, then for $1\leq\ell\leq n-2$, we see that 
\[
[E_{\ell,n-1}+E_{n-1,\ell},E_{\ell,n-1}-E_{n-1,\ell}]=2(E_{n-1,n-1}-E_{\ell,\ell})=2Z_\ell\in[{\mathscr{D}}_{XY},{\mathscr{D}}_{XY}].\qedhere
\]
\end{proof}

As an immediate consequence of the previous result, we have that for an arbitrary subset ${\mathcal I}$ of $\{1,\dotsc,n-2\}$, $n\geq3$, the collection
\begin{equation}\label{eq:basis}
{\mathscr C}_{\mathcal I}=\left\{X_1,\dotsc,X_{\frac{n(n-1)}2},Y_1,\dotsc,Y_{\frac{n(n-1)}2}\right\}\cup\{Z_\ell\colon\ell\in{\mathcal I}\}.
\end{equation}
is also a Lie algebra generating set for ${\mathfrak{g}}$. As it turns out, this collection of vectors is not orthogonal, but the metric restricted to 
\begin{equation}\label{eq:dist}
{\mathscr{D}}_{\mathcal I}={\rm span}\,{\mathscr C}_{\mathcal I}
\end{equation}
can be easily described as follows.

\begin{Pro}\label{prop:norm}
The Gram matrix for the bilinear form $B$ restricted to ${\mathscr{D}}_{\mathcal I}$ in the ordered basis ${\mathscr{C}}_{\mathcal I}$ is the $(n^2-n+\#{\mathcal I})\times(n^2-n+\#{\mathcal I})$ symmetric block matrix given by
\[
g=\begin{pmatrix}2\,{\rm id}_{\frac{n(n-1)}2}&0&0\\0&-2\,{\rm id}_{\frac{n(n-1)}2}&0\\0&0&{\mathcal P}_{\#{\mathcal I}}\end{pmatrix}.
\]
\end{Pro}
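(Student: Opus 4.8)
The plan is to compute the Gram matrix entry by entry, using the same elementary trace identities that appeared in the proof of the earlier proposition. The key observation is that the generating set $\mathscr{C}_{\mathcal I}$ decomposes into three natural families: the $X_r$'s, the $Y_r$'s, and the $Z_\ell$'s (for $\ell \in \mathcal I$), and we must show the cross-block terms vanish and compute each diagonal block separately. Throughout, recall that $B(A, A') = \mathrm{tr}(A A')$ and that, by construction, $X_r = e_r^1 + e_r^2$, $Y_r = e_r^1 - e_r^2$ and $Z_\ell = e_\ell - e_{n-1}$.

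First I would handle the $X$-$X$ and $Y$-$Y$ blocks. Since $e_r^1$ and $e_r^2$ are the off-diagonal matrix units $E_{p,q}$ and $E_{q,p}$ with $p < q$ as in the renumbering \eqref{eq:mat}, the proof of the earlier proposition already records $B(e_r^1, e_s^1) = B(e_r^2, e_s^2) = 0$ and $B(e_r^1, e_s^2) = \delta_{rs}$. Expanding bilinearly, $B(X_r, X_s) = B(e_r^1, e_s^2) + B(e_r^2, e_s^1) = 2\delta_{rs}$ and likewise $B(Y_r, Y_s) = -2\delta_{rs}$, which gives the two scalar blocks $\pm 2\,\mathrm{id}_{n(n-1)/2}$. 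For the $X$-$Y$ block, $B(X_r, Y_s) = B(e_r^1, e_s^1) - B(e_r^1, e_s^2) + B(e_r^2, e_s^1) - B(e_r^2, e_s^2) = -\delta_{rs} + \delta_{rs} = 0$, so these two families are mutually $B$-orthogonal. Next, the $X$-$Z$ and $Y$-$Z$ blocks vanish because each $e_r^i$ is an off-diagonal matrix unit while each $Z_\ell$ is a diagonal traceless matrix, and the product of an off-diagonal unit with a diagonal matrix has zero trace; concretely this is just $B(e_r^i, e_\ell) = 0$ from the earlier proposition, extended to $Z_\ell = e_\ell - e_{n-1}$ by linearity.

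The remaining work is the $Z$-$Z$ block, and this is the only step requiring a small computation rather than quoting the earlier proposition verbatim. Using the identity $B(e_k, e_\ell) = 2$ if $k = \ell$ and $1$ otherwise (established in the proof of the earlier proposition), I expand $B(Z_k, Z_\ell) = B(e_k - e_{n-1}, e_\ell - e_{n-1}) = B(e_k, e_\ell) - B(e_k, e_{n-1}) - B(e_{n-1}, e_\ell) + B(e_{n-1}, e_{n-1})$ for $k, \ell \in \mathcal I \subseteq \{1, \dotsc, n-2\}$. When $k = \ell$ this is $2 - 1 - 1 + 2 = 2$, and when $k \neq \ell$ it is $1 - 1 - 1 + 2 = 1$, so the $Z$-$Z$ block is exactly $\mathcal{P}_{\#\mathcal I}$ as defined in \eqref{eq:Pm}. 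Assembling the blocks in the order prescribed by the ordered basis $\mathscr{C}_{\mathcal I}$ yields the stated matrix $g$, and its size is $\frac{n(n-1)}{2} + \frac{n(n-1)}{2} + \#\mathcal I = n^2 - n + \#\mathcal I$. I do not anticipate any real obstacle here: the entire argument is a careful bookkeeping of bilinear expansions, and the only point demanding attention is keeping the indices $k, \ell$ strictly below $n-1$ so that the subtraction of $e_{n-1}$ never collides with $e_k$ or $e_\ell$ — precisely the reason the $Z_\ell$ were indexed by $\{1, \dotsc, n-2\}$ in the first place.
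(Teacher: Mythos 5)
Your proof is correct and follows essentially the same route as the paper's: the paper simply asserts the values $B(X_i,Y_j)=B(X_i,Z_k)=B(Y_i,Z_k)=0$, $B(X_i,X_j)=-B(Y_i,Y_j)=2\delta_{ij}$ and $B(Z_k,Z_\ell)=2$ or $1$ as ``direct computations,'' while you carry out the bilinear expansions explicitly from the identities $B(e_r^1,e_s^2)=\delta_{rs}$ and $B(e_k,e_\ell)\in\{1,2\}$ established earlier. All the block computations, including the $Z$--$Z$ block giving ${\mathcal P}_{\#{\mathcal I}}$, check out.
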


\begin{proof}
For $i,j\in\left\{1,\dotsc,\frac{n(n-1)}2\right\}$ and for $k,\ell\in\{1,\dotsc,n-2\}$, the following equalities can be computed directly from the expressions found in Corollary \ref{cor:vecprop}
\[
B(X_i,Y_j)=B(X_i,Z_k)=B(Y_i,Z_k)=0,\quad B(X_i,X_j)=-B(Y_i,Y_j)=\begin{cases}2&\mbox{if }i=j,\\0&\mbox{else,}\end{cases}
\]
and
\[
B(Z_k,Z_\ell)=\begin{cases}2&\mbox{if }k=\ell,\\1&\mbox{else.}\end{cases}\qedhere
\]
\end{proof}

\begin{Ej}
For $n=2$ and $n=3$, this construction can be easily described. For instance, in ${\mathfrak{sl}}(2,{\mathbb R})$, the matrices presented in \eqref{eq:mat} are
\[
e_1^1=\begin{pmatrix}0&1\\0&0\end{pmatrix},\quad e_1^2=\begin{pmatrix}0&0\\1&0\end{pmatrix},\quad e_1=\begin{pmatrix}-1&0\\0&1\end{pmatrix}.
\]
It is easy to check that
\[
{\rm tr}(e_1^1e_1^1)={\rm tr}(e_1^2e_1^2)={\rm tr}(e_1^1e_1)={\rm tr}(e_1^2e_1)=0,\quad{\rm tr}(e_1^1e_1^2)=1,\quad{\rm tr}(e_1e_1)=2.
\]
Then the Gram matrix is simply $Gr=\begin{pmatrix}0&1&0\\1&0&0\\0&0&2\end{pmatrix}$ and the basis of eigenvectors of $B$ is
\[
H=\begin{pmatrix}-1&0\\0&1\end{pmatrix},\quad X_1=\begin{pmatrix}0&1\\1&0\end{pmatrix},\quad Y_1=\begin{pmatrix}0&1\\-1&0\end{pmatrix}.
\]
Concretely, since every matrix $L=\begin{pmatrix}-a&b\\c&a\end{pmatrix}=ae_1+be_1^1+ce_1^2\in{\mathfrak{sl}}(2,{\mathbb R})$ can be written uniquely as
\[
L=aH+\frac{b+c}2X_1+\frac{b-c}2Y_1
\]
then we obtain
\begin{align*}
B(L,L)&=a^2B(H,H)+\left(\frac{b+c}2\right)^2B(X_1,X_1)+\left(\frac{b-c}2\right)^2B(Y_1,Y_1)\\
&=2a^2+2\left(\frac{b+c}2\right)^2-2\left(\frac{b-c}2\right)^2=2a^2+2bc.
\end{align*}
Similarly, for ${\mathfrak{sl}}(3,{\mathbb R})$, the matrices presented in \eqref{eq:mat} are
\[
e_1^1=\begin{pmatrix}0&1&0\\0&0&0\\0&0&0\end{pmatrix},\quad e_1^2=\begin{pmatrix}0&0&0\\1&0&0\\0&0&0\end{pmatrix},\quad e_2^1=\begin{pmatrix}0&0&1\\0&0&0\\0&0&0\end{pmatrix},\quad e_2^2=\begin{pmatrix}0&0&0\\0&0&0\\1&0&0\end{pmatrix},
\]
\[
e_3^1=\begin{pmatrix}0&0&0\\0&0&1\\0&0&0\end{pmatrix},\quad e_3^2=\begin{pmatrix}0&0&0\\0&0&0\\0&1&0\end{pmatrix},\quad e_1=\begin{pmatrix}-1&0&0\\0&0&0\\0&0&1\end{pmatrix},\quad e_2=\begin{pmatrix}0&0&0\\0&-1&0\\0&0&1\end{pmatrix}.
\]
The Gram matrix is the $8\times8$ block diagonal matrix
\[
Gr=\begin{pmatrix}J&0&0&0\\0&J&0&0\\0&0&J&0\\0&0&0&{\mathcal P}_2\end{pmatrix},
\]
and the basis of eigenvectors is
\[
X_1=e_1^1=\begin{pmatrix}0&1&0\\1&0&0\\0&0&0\end{pmatrix},\quad X_2=\begin{pmatrix}0&0&1\\0&0&0\\1&0&0\end{pmatrix},\quad X_3=\begin{pmatrix}0&0&0\\0&0&1\\0&1&0\end{pmatrix},
\]
\[
Y_1=e_1^1=\begin{pmatrix}0&1&0\\-1&0&0\\0&0&0\end{pmatrix},\quad Y_2=\begin{pmatrix}0&0&1\\0&0&0\\-1&0&0\end{pmatrix},\quad Y_3=\begin{pmatrix}0&0&0\\0&0&1\\0&-1&0\end{pmatrix},
\]
\[
Z_1=\begin{pmatrix}-1&0&0\\0&1&0\\0&0&0\end{pmatrix},\quad H=\begin{pmatrix}-1&0&0\\0&-1&0\\0&0&2\end{pmatrix}.
\]
A similar --but lengthier-- computation as before, shows that for an arbitrary traceless $3\times3$ matrix 
\begin{align*}
L=\begin{pmatrix}-a&b&c\\d&-e&f\\g&h&a+e\end{pmatrix}&=\frac{a-e}2Z_1+\frac{a+e}2H+\frac{b+d}2X_1+\frac{b-d}2Y_1\\
&+\frac{c+g}2X_2+\frac{c-g}2Y_2+\frac{f+h}2X_3+\frac{f-h}2Y_3
\end{align*}
in ${\mathfrak{sl}}(3,{\mathbb R})$, we have
\[
B(L,L)=2(a^2+a e+e^2+b d+cg+fh).
\]
\end{Ej}

As a consequence of Theorem \ref{th:bg}, by left translating ${\mathscr{D}}_{\mathcal I}$ from equation \eqref{eq:dist} to the entire group $G$, we obtain the following geometric result 

\begin{Th}
For an arbitrary subset ${\mathcal I}$ of $\{1,\dotsc,n-2\}$, $n\geq3$, the left-translation of the subspace ${\mathscr{D}}_{\mathcal I}\subset{\mathfrak{g}}$
defines a distribution on $G={\rm SL}(n,{\mathbb R})$ on which the bilinear form $B$ defines a bi-invariant sub-pseudo-Riemannian metric. The corank of ${\mathscr{D}}_{\mathcal I}$ is $n-1-\#{\mathcal I}$ and the index of $B$ restricted to ${\mathscr{D}}_{\mathcal I}$ is ${n(n-1)}/2$.
\end{Th}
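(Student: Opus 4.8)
The plan is to assemble the statement from the algebraic facts already established; the only genuinely new work is the computation of the corank and of the index. First I would describe the distribution explicitly. Since $G$ is a matrix group, left translation sends $v\in{\mathfrak g}$ to $xv$, so the family of left-invariant vector fields $\{x\mapsto xW:W\in{\mathscr C}_{\mathcal I}\}$ is a global frame; it is smooth and everywhere linearly independent because ${\mathscr C}_{\mathcal I}$ is a linearly independent subset of ${\mathfrak g}$. Hence ${\mathscr D}_{\mathcal I}$, defined as the span of this frame at each point, is a smooth vector sub-bundle of $TG$ of constant rank $\dim{\mathscr D}_{\mathcal I}=n^2-n+\#{\mathcal I}$, and its corank is $\dim G-\dim{\mathscr D}_{\mathcal I}=(n^2-1)-(n^2-n+\#{\mathcal I})=n-1-\#{\mathcal I}$.

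Next I would check that ${\mathscr D}_{\mathcal I}$ is bracket generating. Lie brackets of left-invariant vector fields are again left-invariant and correspond to the Lie bracket of ${\mathfrak g}$ under the identification $T_{{\rm id}_n}G\cong{\mathfrak g}$; therefore the subspace of $T_{{\rm id}_n}G$ spanned by ${\mathscr C}_{\mathcal I}$ together with all its iterated brackets is exactly the smallest Lie subalgebra of ${\mathfrak g}$ containing ${\mathscr C}_{\mathcal I}$, which by Theorem~\ref{th:bg} (in the form recorded for ${\mathscr C}_{\mathcal I}$ in equation~\eqref{eq:basis} and the paragraph around it) equals ${\mathfrak g}$. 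By left-invariance the same holds at every point, so the sections of ${\mathscr D}_{\mathcal I}$ and their iterated brackets span $T_xG$ for all $x\in G$; that is, ${\mathscr D}_{\mathcal I}$ is bracket generating, and since $G$ is connected the Chow--Rashevski{\u\i} theorem applies.

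For the metric, I would invoke Proposition~\ref{prop:norm}: the Gram matrix of $B|_{{\mathscr D}_{\mathcal I}}$ in the basis ${\mathscr C}_{\mathcal I}$ is the block-diagonal matrix $g=\mathrm{diag}\bigl(2\,{\rm id}_{\frac{n(n-1)}2},\,-2\,{\rm id}_{\frac{n(n-1)}2},\,{\mathcal P}_{\#{\mathcal I}}\bigr)$. This is non-degenerate because each block is: $\det g=2^{n(n-1)/2}(-2)^{n(n-1)/2}\det{\mathcal P}_{\#{\mathcal I}}\neq0$, the last factor being nonzero since ${\mathcal P}_m$ has the explicit inverse computed in Section~\ref{sec:framework}. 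Transporting $B$ by left translation, i.e. setting $g_x(xv,xw):=B(v,w)$ for $v,w\in{\mathscr D}_{\mathcal I}$, produces a smooth, left-invariant, non-degenerate fibre metric on ${\mathscr D}_{\mathcal I}$; this is precisely the restriction to ${\mathscr D}_{\mathcal I}$ of the pseudo-Riemannian metric on $G$ induced by $B$. That ambient metric is bi-invariant by the standard ${\rm Ad}$-invariance of the Killing form, $\kappa({\rm Ad}_gX,{\rm Ad}_gY)=\kappa(X,Y)$, together with $B=\frac1{2n}\kappa$. (The distribution itself is only left-invariant and not ${\rm Ad}$-stable, since ${\mathfrak g}$ is simple and ${\mathscr D}_{\mathcal I}\subsetneq{\mathfrak g}$; by a ``bi-invariant sub-pseudo-Riemannian metric'' we mean the restriction of a bi-invariant pseudo-Riemannian metric on $G$.)

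Finally, the index. Because $g$ is block diagonal, the maximal dimension of a negative-definite subspace of $({\mathscr D}_{\mathcal I},B)$ is the sum of the corresponding numbers for the three blocks. The block $2\,{\rm id}_{\frac{n(n-1)}2}$ is positive definite, and ${\mathcal P}_m={\rm id}_m+\mathbf 1\mathbf 1^{\top}$ with $\mathbf 1=(1,\dotsc,1)^{\top}$ has eigenvalues $1$ with multiplicity $m-1$ and $m+1$ simple, hence is positive definite as well; the block $-2\,{\rm id}_{\frac{n(n-1)}2}$ is negative definite of dimension $\frac{n(n-1)}2$. Therefore the index of $B|_{{\mathscr D}_{\mathcal I}}$ equals $\frac{n(n-1)}2$. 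The only step with any substance beyond bookkeeping is this signature count, whose mild subtlety is the positive-definiteness of ${\mathcal P}_{\#{\mathcal I}}$; everything else is a direct consequence of Theorem~\ref{th:bg} and Proposition~\ref{prop:norm}.
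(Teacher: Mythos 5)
Your proof is correct and follows exactly the route the paper intends: the paper states this theorem without proof, presenting it as an immediate consequence of Theorem~\ref{th:bg} (bracket generation) and Proposition~\ref{prop:norm} (the Gram matrix of $B|_{{\mathscr D}_{\mathcal I}}$), and your argument simply writes out that deduction in full, including the dimension count for the corank and the signature count $\bigl(+\tfrac{n(n-1)}2,\,-\tfrac{n(n-1)}2,\,{\mathcal P}_{\#{\mathcal I}}\succ0\bigr)$ for the index. Your parenthetical clarification that the distribution itself cannot be ${\rm Ad}$-stable (simplicity of ${\mathfrak g}$) and that ``bi-invariant'' must therefore refer to the ambient metric induced by the ${\rm Ad}$-invariant Killing form is a correct and worthwhile precision that the paper leaves implicit.
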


The case of $n=2$ is well-known in the literature and both its sub-Riemannian and sub-Lorentzian geometries have been explored in depth in \cite{GV} using the fact that ${\rm SL}(2,{\mathbb R})\cong{\rm SU}(1,1)$ and that the projection ${\rm SU}(1,1)\to{\rm SU}(1,1)/{\rm U}(1)$ is a principal ${\rm U}(1)$-bundle. The main idea in the aforementioned reference is to horizontally lift appropriate curves in the two dimensional K\"ahler manifold ${\rm SU}(1,1)/{\rm U}(1)$ using the ${\rm U}(1)$-action. This approach does not seem generalize easily to the higher dimensional situation.

In the following section we will study the Hamiltonian system on $T^*G$ associated to the metric quadratic Hamiltonian function corresponding to the pseudo-Riemannian metric defined on the family of bracket-generating distributions ${\mathscr{D}}_{\mathcal I}$.

\section{Hamiltonian formalism}\label{sec:ham}

It is a well-known strategy in sub-Riemannian geometry that normal geodesics can be found using the classical Hamiltonian formalism related to the metric, as briefly mentioned in Subsection \ref{ssec:bg}. Even though in sub-pseudo-Riemannian geometry these curves do not possess the same locally distance minimizing properties than in the positive definite case, we can still describe geometric properties of the flow of the corresponding metric quadratic Hamiltonian functions ${\mathscr{H}}_{\mathcal I}\colon T^*G\to{\mathbb R}$, for ${\mathcal I}\subset\{1,\dotsc,n-2\}$, $n\geq3$. The case $n=2$ will be dealt with separately in Subsection \ref{ssec:n=2}. 

\subsection{Coordinate description of the metric Hamiltonian}

In what follows we will use fact that $G$ is a hypersurface in $n^2$-dimensional Euclidean space
\begin{equation}\label{eq:coord}
{\mathbb R}^{n\times n}=\{A=(a_{i,j})\colon 1\leq i,j\leq n\}.
\end{equation}

The idea is to describe explicitly the Hamiltonian function 
\begin{equation}\label{eq:Ham}
{\mathscr{H}}_{\mathcal I}=\frac14\sum_{r=1}^{\frac{n(n-1)}2}(P_{{\mathcal X}_r}^2-P_{{\mathcal Y}_r}^2)+\frac12\sum_{k,\ell\in{\mathcal I}}p^{k,\ell}P_{{\mathcal Z}_k}P_{{\mathcal Z}_\ell},
\end{equation}
where ${\mathcal X}_r$, ${\mathcal Y}_r$ and ${\mathcal Z}_\ell$ represent the the left-invariant vector fields on $G$ associated to the vectors $X_r$, $Y_r$ and $Z_\ell$ described in Corollary \ref{cor:vecprop}, respectively, and $P_V$ was described in Subsection  \ref{ssec:bg}. The difference in the first factor of the Hamiltonian \eqref{eq:Ham} compared to \eqref{eq:Hamnonorth} is to conform to the norms found in Proposition \ref{prop:norm}. 

Recall that, since $G$ is a matrix Lie group, the left translation of $v\in{\mathfrak{g}}$ to the tangent space $T_AG$, for $A\in G$, is given by
\[
d_{{\rm id}_n}L_A(v)=Av.
\]
Therefore, the vector fields ${\mathcal X}_r$, ${\mathcal Y}_r$ and ${\mathcal Z}_\ell$ are given by
\[
{\mathcal X}_r|_A=AX_r,\quad{\mathcal Y}_r|_A=AY_r,\quad{\mathcal Z}_\ell|_A=AZ_\ell,
\]
for $A\in G$. To be more explicit, for $A\in G$ and $1\leq p<q\leq n$, define the $n\times n$ matrices $A^+_{p,q}$ and $A^-_{p,q}$ by
\begin{equation}\label{eq:XY}
A(E_{p,q}\pm E_{q,p})=A^\pm_{p,q},
\end{equation}
that is, $A^+_{p,q}$ is the matrix with all entries zero except that its $p^{\rm th}$ column is the $q^{\rm th}$ column of $A$ and its $q^{\rm th}$ column is the $p^{\rm th}$ column of $A$, and similarly $A^-_{p,q}$ is the matrix with all entries zero except that its $p^{\rm th}$ column is the $q^{\rm th}$ column of $A$ and its $q^{\rm th}$ column is minus the $p^{\rm th}$ column of $A$. This describes completely the vector fields ${\mathcal X}_r$ and ${\mathcal Y}_r$ according to the renumbering \eqref{eq:mat}. Analogously, for $n\geq3$ and $1\leq\ell\leq n-2$, define the $n\times n$ matrices $A_\ell$
\begin{equation}\label{eq:Z}
A(E_{\ell,\ell}-E_{n-1,n-1})=A_\ell
\end{equation}
is the matrix with all entries zero except that its $\ell^{\rm th}$ column is the $\ell^{\rm th}$ column of $A$ and its $(n-1)^{\rm st}$ column is minus the $(n-1)^{\rm st}$ column of $A$. This describes completely the vector fields ${\mathcal Z}_\ell$.

Having described the nature of all vector fields that generate the distributions ${\mathscr{D}}_{\mathcal I}$, we can present explicitly the Hamiltonian ${\mathscr{H}}_{\mathcal I}$ in coordinates.

\begin{Th}\label{th:Hamcoord}
For a given subset ${\mathcal I}\subset\{1,\dotsc,n-2\}$ the metric quadratic Hamiltonian ${\mathscr{H}}_{\mathcal I}$ is given by
\begin{multline}\label{eq:Hamcoord}
{\mathscr{H}}_{\mathcal I}(a;\lambda)=\frac12\sum_{{\substack{p,q=1\\p\neq q}}}^n\sum_{i,j=1}^na_{i,p}a_{j,q}\lambda_{j,p}\lambda_{i,q}\\
+\frac12\sum_{k,\ell\in{\mathcal I}}\sum_{i,j=1}^np^{k,\ell}(a_{i,k}\lambda_{i,k}-a_{i,n-1}\lambda_{i,n-1})(a_{j,\ell}\lambda_{j,\ell}-a_{j,n-1}\lambda_{j,n-1})
\end{multline}
for $(a;\lambda)\in T^*G$ in the Euclidean coordinates induced by \eqref{eq:coord}.
\end{Th}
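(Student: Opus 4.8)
The plan is to compute each momentum function $P_V$ for $V\in\{{\mathcal X}_r,{\mathcal Y}_r,{\mathcal Z}_\ell\}$ in the Euclidean coordinates induced by \eqref{eq:coord}, and then substitute into the definition \eqref{eq:Ham}. The starting point is that a cotangent vector $\lambda\in T_A^*G$ can be represented (after restriction from $T^*{\mathbb R}^{n\times n}$) by a matrix $\lambda=(\lambda_{i,j})$, so that for any tangent vector $W=(w_{i,j})\in T_AG\subset{\mathbb R}^{n\times n}$ one has $\lambda(W)=\sum_{i,j}\lambda_{i,j}w_{i,j}$. Since the left-invariant vector fields are $\mathcal{X}_r|_A=AX_r$, $\mathcal{Y}_r|_A=AY_r$, $\mathcal{Z}_\ell|_A=AZ_\ell$, computing $P_{\mathcal X_r}$, etc., reduces to writing out these matrix products entrywise.

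First I would record the entrywise formulas. Using \eqref{eq:XY}, if $X_r=E_{p,q}+E_{q,p}$ with $1\le p<q\le n$, then $(AX_r)_{i,m}$ equals $a_{i,q}$ when $m=p$, equals $a_{i,p}$ when $m=q$, and vanishes otherwise; consequently $P_{\mathcal X_r}(A;\lambda)=\sum_{i=1}^n(a_{i,q}\lambda_{i,p}+a_{i,p}\lambda_{i,q})$. Similarly $P_{\mathcal Y_r}(A;\lambda)=\sum_{i=1}^n(a_{i,q}\lambda_{i,p}-a_{i,p}\lambda_{i,q})$, and from \eqref{eq:Z}, $P_{\mathcal Z_\ell}(A;\lambda)=\sum_{i=1}^n(a_{i,\ell}\lambda_{i,\ell}-a_{i,n-1}\lambda_{i,n-1})$. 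The key algebraic simplification is the identity $P_{\mathcal X_r}^2-P_{\mathcal Y_r}^2=(P_{\mathcal X_r}+P_{\mathcal Y_r})(P_{\mathcal X_r}-P_{\mathcal Y_r})$, which gives
\[
P_{\mathcal X_r}^2-P_{\mathcal Y_r}^2=\Bigl(2\sum_{i=1}^na_{i,q}\lambda_{i,p}\Bigr)\Bigl(2\sum_{j=1}^na_{j,p}\lambda_{j,q}\Bigr)=4\sum_{i,j=1}^na_{i,q}a_{j,p}\lambda_{i,p}\lambda_{j,q}.
\]
Then I would sum over $r=1,\dots,\frac{n(n-1)}2$, which by the renumbering \eqref{eq:mat} corresponds exactly to summing over all ordered pairs $(p,q)$ with $p<q$; symmetrizing this to an unrestricted sum over $p\neq q$ (at the cost of halving, since the summand is symmetric under $p\leftrightarrow q$ after relabelling $i\leftrightarrow j$) produces the first double sum in \eqref{eq:Hamcoord}, once the overall factor $\tfrac14$ from \eqref{eq:Ham} is accounted for: $\tfrac14\cdot 4\cdot\tfrac12=\tfrac12$ per ordered pair.

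The second term is more direct: $\tfrac12\sum_{k,\ell\in\mathcal I}p^{k,\ell}P_{\mathcal Z_k}P_{\mathcal Z_\ell}$ becomes, upon inserting the formula for $P_{\mathcal Z_\ell}$, exactly the second sum in \eqref{eq:Hamcoord}. I expect the main obstacle to be purely bookkeeping: making the index correspondence between the running index $r$ and the pairs $(p,q)$ of \eqref{eq:mat} precise, and correctly tracking the symmetrization step (the swap $i\leftrightarrow j$ together with $p\leftrightarrow q$ that shows the summand $a_{i,p}a_{j,q}\lambda_{j,p}\lambda_{i,q}$ is symmetric, so that the sum over $p<q$ is half the sum over $p\neq q$). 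No genuinely hard step arises — once the momentum functions are in hand, the factorization of the difference of squares does all the work, and the remaining verification is a matching of coefficients.
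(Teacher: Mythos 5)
Your proposal is correct and follows essentially the same route as the paper: compute the momentum functions $P_{{\mathcal X}_{p,q}}$, $P_{{\mathcal Y}_{p,q}}$, $P_{{\mathcal Z}_\ell}$ entrywise from the left-translated matrices, apply the difference-of-squares identity to reduce $P_{{\mathcal X}}^2-P_{{\mathcal Y}}^2$ to $4\bigl(\sum_j a_{j,q}\lambda_{j,p}\bigr)\bigl(\sum_i a_{i,p}\lambda_{i,q}\bigr)$, and symmetrize the sum over $p<q$ to one over $p\neq q$ at the cost of a factor $\tfrac12$. Your bookkeeping of the coefficients and of the $i\leftrightarrow j$, $p\leftrightarrow q$ relabelling matches the paper's computation exactly.
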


\begin{proof}
In coordinates, the vector fields ${\mathcal X}_r={\mathcal X}_{p,q}$, ${\mathcal Y}_r={\mathcal Y}_{p,q}$ and ${\mathcal Z}_\ell$ are given by
\[
{\mathcal X}_{p,q}=\sum_{i=1}^n\left(a_{i,q}\frac{\partial}{\partial a_{i,p}}+a_{i,p}\frac{\partial}{\partial a_{i,q}}\right),\quad{\mathcal Y}_{p,q}=\sum_{i=1}^n\left(a_{i,q}\frac{\partial}{\partial a_{i,p}}-a_{i,p}\frac{\partial}{\partial a_{i,q}}\right),
\]
\[
{\mathcal Z}_\ell=\sum_{i=1}^n\left(a_{i,\ell}\frac{\partial}{\partial a_{i,\ell}}-a_{i,n-1}\frac{\partial}{\partial a_{i,n-1}}\right),
\]
for $1\leq p<q\leq n$ and $\ell\in\{1,\dotsc,n-2\}$. These follow directly from the descriptions after equalities \eqref{eq:XY} and \eqref{eq:Z} respectively.

Since we are considering $G\hookrightarrow{\mathbb R}^{n\times n}$, we define as usual the covector $\lambda_{i,j}$ as dual to the coordinate vector field $\dfrac{\partial}{\partial a_{i,j}}$. Therefore, the corresponding momentum functions are simply
\[
P_{{\mathcal X}_{p,q}}=\sum_{i=1}^n\left(a_{i,q}\lambda_{i,p}+a_{i,p}\lambda_{i,q}\right),\quad P_{{\mathcal Y}_{p,q}}=\sum_{i=1}^n\left(a_{i,q}\lambda_{i,p}-a_{i,p}\lambda_{i,q}\right),
\]
\[
P_{{\mathcal Z}_\ell}=\sum_{i=1}^n\left(a_{i,\ell}\lambda_{i,\ell}-a_{i,n-1}\lambda_{i,n-1}\right).
\]

Substituting these expressions for the momenta into equation \eqref{eq:Ham}, we have
\begin{align*}
{\mathscr{H}}_{\mathcal I}(a;\lambda)&=\frac14\sum_{1\leq p<q\leq n}\left(\left(\sum_{i=1}^n\left(a_{i,q}\lambda_{i,p}+a_{i,p}\lambda_{i,q}\right)\right)^2-\left(\sum_{i=1}^n\left(a_{i,q}\lambda_{i,p}-a_{i,p}\lambda_{i,q}\right)\right)^2\right)\\
&+\frac12\sum_{k,\ell\in{\mathcal I}}\sum_{i,j=1}^np^{k,\ell}(a_{i,k}\lambda_{i,k}-a_{i,n-1}\lambda_{i,n-1})(a_{j,\ell}\lambda_{j,\ell}-a_{j,n-1}\lambda_{j,n-1}).
\end{align*}

Focusing only on the term ${\mathscr{H}}_{\varnothing}$, we have
\[
{\mathscr{H}}_{\varnothing}(a;\lambda)=\sum_{1\leq p<q\leq n}\left(\sum_{j=1}^na_{j,q}\lambda_{j,p}\right)\left(\sum_{i=1}^na_{i,p}\lambda_{i,q}\right)=\sum_{1\leq p<q\leq n}\left(\sum_{i,j=1}^na_{j,q}\lambda_{j,p}a_{i,p}\lambda_{i,q}\right),
\]
from which equality \eqref{eq:Hamcoord} follows directly, taking into account that in the first sum we simply double the index set and change variables.
\end{proof}

Following the usual techniques from Hamiltonian systems, we can write down immediately the system of ODEs on $T^*G$ that correspond to ${\mathscr H}_{\mathcal I}$ and starting at $a^0=(a_{i,j}^0)$ and $\lambda^0=(\lambda_{i,j}^0)$ at time $t=0$ 

\begin{Cor}
The integral curves of the metric quadratic Hamiltonian \eqref{eq:Hamcoord} starting from $(a_{i,j}^0)\in{\mathfrak{g}}$ with initial covector $\lambda=(\lambda_{i,j}^0)\in{\mathfrak g}^*$ satisfy the system
\begin{equation}\label{eq:Hamsys}
\begin{cases}\dot a_{i,j}=\dfrac{\partial{\mathscr H}_{\mathcal I}}{\partial\lambda_{i,j}}\\
\dot\lambda_{i,j}=-\dfrac{\partial{\mathscr H}_{\mathcal I}}{\partial a_{i,j}}\\
a_{i,j}(0)=a_{i,j}^0\\\lambda_{i,j}(0)=\lambda_{i,j}^0\end{cases}
\end{equation}
\end{Cor}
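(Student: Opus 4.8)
The statement is a direct application of the Hamiltonian formalism recalled in Subsection \ref{ssec:bg}, once the explicit coordinate form \eqref{eq:Hamcoord} of ${\mathscr H}_{\mathcal I}$ from Theorem \ref{th:Hamcoord} is available. The plan is as follows. Viewing $G$ as the hypersurface $\{\det a=1\}$ inside ${\mathbb R}^{n\times n}$ as in \eqref{eq:coord}, the functions $(a_{i,j},\lambda_{i,j})$, with $\lambda_{i,j}$ dual to $\partial/\partial a_{i,j}$ exactly as in the proof of Theorem \ref{th:Hamcoord}, are canonical (Darboux) coordinates on $T^*{\mathbb R}^{n\times n}$, in which the symplectic form is $\omega=\sum_{i,j}da_{i,j}\wedge d\lambda_{i,j}$. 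The Hamiltonian vector field $\vec{\mathscr H}_{\mathcal I}$ defined by $\iota_{\vec{\mathscr H}_{\mathcal I}}\omega=-d{\mathscr H}_{\mathcal I}$ then has components $\partial{\mathscr H}_{\mathcal I}/\partial\lambda_{i,j}$ along $\partial/\partial a_{i,j}$ and $-\partial{\mathscr H}_{\mathcal I}/\partial a_{i,j}$ along $\partial/\partial\lambda_{i,j}$; writing the flow equation for $\vec{\mathscr H}_{\mathcal I}$ together with the prescribed data at $t=0$ gives precisely \eqref{eq:Hamsys}.

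The only point that deserves an argument, as opposed to being purely formal, is that \eqref{eq:Hamsys} is compatible with the constraint defining $G$, i.e.\ that a solution issuing from $T^*G$ remains in $T^*G$. Here I would observe that each vector field $\mathcal{X}_{p,q}$, $\mathcal{Y}_{p,q}$, $\mathcal{Z}_\ell$ extends, via the formulas following \eqref{eq:XY} and \eqref{eq:Z}, to a vector field on all of ${\mathbb R}^{n\times n}$ that annihilates the polynomial $\det$, because
\[
\left.\frac{d}{dt}\right|_{t=0}\det\!\left(Ae^{tW}\right)=\det(A)\,{\rm tr}(W)=0
\]
for every $W\in\{E_{p,q}\pm E_{q,p},\ E_{\ell,\ell}-E_{n-1,n-1}\}$, all of which are traceless. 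Hence $\{P_V,\det\}=V(\det)=0$ for each such generator $V$, and since ${\mathscr H}_{\mathcal I}$ is a polynomial in these momentum functions, $\{{\mathscr H}_{\mathcal I},\det\}=0$ throughout $T^*{\mathbb R}^{n\times n}$. Therefore the flow of $\vec{\mathscr H}_{\mathcal I}$ preserves every level set of $\det$, in particular $G$, and the system restricts consistently to $T^*G$.

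I do not expect any genuine obstacle: the content is bookkeeping with the sign conventions of Hamilton's equations and of the momentum functions $P_V$, together with the observation above that exhibits $G$ as an invariant constraint. Once these are in place, restricting the ambient dynamics on $T^*{\mathbb R}^{n\times n}$ to the locus over $G$ reproduces the intrinsic Hamiltonian dynamics of ${\mathscr H}_{\mathcal I}$ on $T^*G$. One may then, if desired, substitute \eqref{eq:Hamcoord} into \eqref{eq:Hamsys} and differentiate to obtain the system in fully explicit form; this is carried out for $n=2$ in Subsection \ref{ssec:n=2}.
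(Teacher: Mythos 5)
Your proposal is correct and follows essentially the same route as the paper, which states the corollary without proof as an immediate instance of Hamilton's equations in the canonical coordinates $(a_{i,j},\lambda_{i,j})$ on $T^*{\mathbb R}^{n\times n}$. Your additional verification that the flow preserves the constraint locus $\{\det a=1\}$ --- via $\left.\tfrac{d}{dt}\right|_{t=0}\det(Ae^{tW})=\det(A)\,{\rm tr}(W)=0$ for the traceless generators, hence $\{{\mathscr H}_{\mathcal I},\det\}=0$ --- is a point the paper leaves implicit, and it is worth recording since the Hamiltonian is written in ambient coordinates.
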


Solving the Hamiltonian system \eqref{eq:Hamsys} for the general case seems to be unmanageable. Nevertheless, as the following subsection shows, we can, in principle, reduce the computations since the Hamiltonian function corresponding to the vector fields ${\mathcal Z}_\ell$ commutes with the one associated to the vector fields ${\mathcal X}_{p,q}$ and ${\mathcal Y}_{p,q}$.

\subsection{Commuting Hamiltonians}

For any nonempty subset ${\mathcal I}\subset\{1,\dotsc,n-2\}$, $n\geq3$, denote by
\[
{\mathscr H}_{\mathcal I}^Z={\mathscr H}_{\mathcal I}-{\mathscr H}_{\varnothing}
\]
the quadratic Hamiltonian associated only to the vector fields ${\mathcal Z}_\ell$, $\ell\in{\mathcal I}$. Then we have

\begin{Th}
The Hamiltonians ${\mathscr H}_{\varnothing}$ and ${\mathscr H}_{\mathcal I}^Z$ Poisson-commute for any nonempty subset ${\mathcal I}\subset\{1,\dotsc,n-2\}$, $n\geq3$, that is
\begin{equation}\label{eq:Hamcomm}
\{{\mathscr H}_{\varnothing},{\mathscr H}_{\mathcal I}^Z\}=\sum_{s,t=1}^n\left(\frac{\partial{\mathscr H}_{\varnothing}}{\partial a_{s,t}}\frac{\partial{\mathscr H}_{\mathcal I}^Z}{\partial\lambda_{s,t}}-\frac{\partial{\mathscr H}_{\varnothing}}{\partial\lambda_{s,t}}\frac{\partial{\mathscr H}_{\mathcal I}^Z}{\partial a_{s,t}}\right)=0.
\end{equation}
\end{Th}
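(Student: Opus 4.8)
The plan is to avoid the coordinate formula of Theorem~\ref{th:Hamcoord} and work directly with the momentum functions, which close under the Poisson bracket. The tool is the classical identity
\[
\{P_V,P_W\}=-P_{[V,W]},
\]
valid for any two vector fields $V,W$ on $\mathbb R^{n\times n}$ with respect to the bracket written in \eqref{eq:Hamcomm}; this is checked in one line by writing $P_V=\sum_{s,t}V^{s,t}\lambda_{s,t}$, differentiating, and recognizing the commutator of vector fields. Since each of ${\mathcal X}_r$, ${\mathcal Y}_r$, ${\mathcal Z}_\ell$ is the field $A\mapsto AM$ for a fixed traceless matrix $M$, and the commutator of $A\mapsto AM$ with $A\mapsto AN$ is again $A\mapsto A[M,N]$ with $[M,N]=MN-NM$, the identity reads $\{P_{\overline M},P_{\overline N}\}=-P_{\overline{[M,N]}}$, where $\overline{\,\cdot\,}$ denotes the associated left-invariant field on $G$.

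First I would expand \eqref{eq:Hamcomm} by bilinearity and the Leibniz rule. Using ${\mathscr H}_\varnothing=\frac14\sum_r(P_{{\mathcal X}_r}^2-P_{{\mathcal Y}_r}^2)$ and ${\mathscr H}_{\mathcal I}^Z={\mathscr H}_{\mathcal I}-{\mathscr H}_\varnothing=\frac12\sum_{k,\ell\in{\mathcal I}}p^{k,\ell}P_{{\mathcal Z}_k}P_{{\mathcal Z}_\ell}$, one gets
\[
\{{\mathscr H}_\varnothing,{\mathscr H}_{\mathcal I}^Z\}=\frac18\sum_{r}\sum_{k,\ell\in{\mathcal I}}p^{k,\ell}\big(\{P_{{\mathcal X}_r}^2,P_{{\mathcal Z}_k}P_{{\mathcal Z}_\ell}\}-\{P_{{\mathcal Y}_r}^2,P_{{\mathcal Z}_k}P_{{\mathcal Z}_\ell}\}\big),
\]
and, applying Leibniz once more together with the identity above, $\{P_A^2,P_BP_C\}=-2P_A\big(P_B\,P_{[A,C]}+P_C\,P_{[A,B]}\big)$. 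Everything is thereby reduced to the brackets $[X_r,Z_k]$ and $[Y_r,Z_k]$ in ${\mathfrak g}$.

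Here is the key observation. By the renumbering \eqref{eq:mat} we may write $X_r=E_{p,q}+E_{q,p}$ and $Y_r=E_{p,q}-E_{q,p}$ for some $1\le p<q\le n$, while $Z_k=E_{k,k}-E_{n-1,n-1}$ is diagonal. Since $[E_{p,q},D]=(d_q-d_p)E_{p,q}$ for any diagonal matrix $D=\mathrm{diag}(d_1,\dots,d_n)$, putting $c_{r,k}=(\delta_{q,k}-\delta_{q,n-1})-(\delta_{p,k}-\delta_{p,n-1})$ we obtain
\[
[X_r,Z_k]=c_{r,k}\,Y_r,\qquad [Y_r,Z_k]=c_{r,k}\,X_r
\]
with the \emph{same} scalar $c_{r,k}$; that is, $\mathrm{ad}_{Z_k}$ interchanges the symmetric and antisymmetric combinations attached to the pair $(p,q)$, which is why the case analysis of Lemma~\ref{lem:comm} does not really enter. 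Substituting, $\{P_{{\mathcal X}_r}^2,P_{{\mathcal Z}_k}P_{{\mathcal Z}_\ell}\}=-2P_{{\mathcal X}_r}P_{{\mathcal Y}_r}\big(c_{r,k}P_{{\mathcal Z}_\ell}+c_{r,\ell}P_{{\mathcal Z}_k}\big)$, and $\{P_{{\mathcal Y}_r}^2,P_{{\mathcal Z}_k}P_{{\mathcal Z}_\ell}\}=-2P_{{\mathcal Y}_r}P_{{\mathcal X}_r}\big(c_{r,k}P_{{\mathcal Z}_\ell}+c_{r,\ell}P_{{\mathcal Z}_k}\big)$ is the identical expression, so every summand in the display above cancels and \eqref{eq:Hamcomm} follows.

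I do not expect a genuine obstacle. The two points requiring care are: (i) the sign in $\{P_V,P_W\}=\pm P_{[V,W]}$ for the convention of \eqref{eq:Hamcomm}, which turns out to be irrelevant to the conclusion precisely because ${\mathscr H}_\varnothing$ carries $P_{{\mathcal X}_r}^2$ and $P_{{\mathcal Y}_r}^2$ with opposite signs; and (ii) the fact that the vector fields involved are restrictions of polynomial vector fields on all of $\mathbb R^{n\times n}$, so that the ambient Poisson bracket of \eqref{eq:Hamcomm} indeed computes $-P_{[M,N]}$. A purely mechanical alternative would be to plug the coordinate formula \eqref{eq:Hamcoord} into \eqref{eq:Hamcomm} and simplify directly; that also works, but the computation is considerably longer and the cancellation much less transparent.
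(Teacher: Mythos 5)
Your proof is correct, and it takes a genuinely different route from the one in the paper. The paper argues by brute force in coordinates: starting from \eqref{eq:Hamcoord}, it computes all four families of partial derivatives explicitly, splits the sum in \eqref{eq:Hamcomm} according to whether $t\in{\mathcal I}$ or $t=n-1$ (the derivatives of ${\mathscr H}_{\mathcal I}^Z$ vanish for all other $t$), and observes that in each of the two resulting blocks the two products differ only by an interchange of the summation indices $s$ and $i$, hence cancel. You instead work at the level of momentum functions, using $\{P_V,P_W\}=-P_{[V,W]}$ for left-invariant fields together with the structure relations $[X_r,Z_k]=c_{r,k}Y_r$ and $[Y_r,Z_k]=c_{r,k}X_r$ with the \emph{same} scalar $c_{r,k}$ --- which hold because $Z_k$ is diagonal, so ${\rm ad}_{Z_k}$ multiplies $E_{p,q}$ and $E_{q,p}$ by opposite scalars and therefore swaps the symmetric and antisymmetric combinations. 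All of your ingredients check out: the bracket $[\,\overline M,\overline N\,]=\overline{[M,N]}$ of the fields $A\mapsto AM$, the Leibniz reduction $\{P_A^2,P_BP_C\}=-2P_A\bigl(P_BP_{[A,C]}+P_CP_{[A,B]}\bigr)$, the normalizations ${\mathscr H}_\varnothing=\frac14\sum_r(P_{{\mathcal X}_r}^2-P_{{\mathcal Y}_r}^2)$ and ${\mathscr H}_{\mathcal I}^Z=\frac12\sum_{k,\ell}p^{k,\ell}P_{{\mathcal Z}_k}P_{{\mathcal Z}_\ell}$ from \eqref{eq:Ham}, and your two caveats about the overall sign and about extending the fields to ${\mathbb R}^{n\times n}$ are exactly the right points to flag and are handled correctly. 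What your approach buys is transparency and strength: the cancellation happens term by term, and in fact your computation shows the sharper statement $\{P_{{\mathcal X}_r}^2-P_{{\mathcal Y}_r}^2,P_{{\mathcal Z}_k}\}=-2c_{r,k}(P_{{\mathcal X}_r}P_{{\mathcal Y}_r}-P_{{\mathcal Y}_r}P_{{\mathcal X}_r})=0$ for every individual $r$ and $k$, from which the theorem follows for any polynomial in the $P_{{\mathcal Z}_k}$; this makes clear that the result is really a consequence of the root-space decomposition (the $Z_k$ lie in the Cartan subalgebra and $P_{E_{p,q}}P_{E_{q,p}}=\frac14(P_{{\mathcal X}_r}^2-P_{{\mathcal Y}_r}^2)$ pairs opposite root vectors), and hence generalizes to other semisimple groups. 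What the paper's computation buys is self-containedness: it reuses the coordinate formulas already derived for Theorem \ref{th:Hamcoord} and needs no Poisson-bracket formalism for momentum functions beyond the raw definition.
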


\begin{proof}
Due to the coordinate form of the Hamiltonian presented in \eqref{eq:Hamcoord}, we see immediately that
\begin{align}
\frac{\partial{\mathscr H}_{\varnothing}}{\partial a_{s,t}}&
=\frac12\sum_{{\substack{p,q=1\\p\neq q}}}^n\sum_{i,j=1}^n\frac{\partial}{\partial a_{s,t}}(a_{i,p}a_{j,q})\lambda_{j,p}\lambda_{i,q}
=\frac12\sum_{{\substack{p,q=1\\p\neq q}}}^n\sum_{i=1}^n(a_{i,q}\lambda_{s,q}+a_{i,p}\lambda_{s,p})\lambda_{i,t},\label{eq:DHemptyDa}\\
\frac{\partial{\mathscr H}_{\varnothing}}{\partial\lambda_{s,t}}&
=\frac12\sum_{{\substack{p,q=1\\p\neq q}}}^n\sum_{i,j=1}^na_{i,p}a_{j,q}\frac{\partial}{\partial\lambda_{s,t}}(\lambda_{j,p}\lambda_{i,q})
=\frac12\sum_{{\substack{p,q=1\\p\neq q}}}^n\sum_{i=1}^na_{i,t}(a_{s,q}\lambda_{i,q}+a_{s,p}\lambda_{i,p}),\label{eq:DHemptyDalpha}\\
\frac{\partial{\mathscr H}_{\mathcal I}^Z}{\partial a_{s,t}}&
=\begin{cases}\displaystyle{\lambda_{s,t}\sum_{\ell\in{\mathcal I}}\sum_{i=1}^np^{t,\ell}(a_{i,\ell}\lambda_{i,\ell}-a_{i,n-1}\lambda_{i,n-1})},&\mbox{if }t\in{\mathcal I},\\
\displaystyle{\frac{\lambda_{s,n-1}}2\sum_{k,\ell\in{\mathcal I}}\sum_{i=1}^np^{k,\ell}(2a_{i,n-1}\lambda_{i,n-1}-a_{i,k}\lambda_{i,k}-a_{i,\ell}\lambda_{i,\ell})},&\mbox{if }t=n-1,\\
0,&\mbox{if }t\notin{\mathcal I}\cup\{n-1\},\end{cases}\label{eq:DHZDa}\\
\frac{\partial{\mathscr H}_{\mathcal I}^Z}{\partial\lambda_{s,t}}&=\begin{cases}\displaystyle{a_{s,t}\sum_{\ell\in{\mathcal I}}\sum_{i=1}^np^{t,\ell}(a_{i,\ell}\lambda_{i,\ell}-a_{i,n-1}\lambda_{i,n-1})},&\mbox{if }t\in{\mathcal I},\\
\displaystyle{\frac{a_{s,n-1}}2\sum_{k,\ell\in{\mathcal I}}\sum_{i=1}^np^{k,\ell}(2a_{i,n-1}\lambda_{i,n-1}-a_{i,k}\lambda_{i,k}-a_{i,\ell}\lambda_{i,\ell})},&\mbox{if }t=n-1,\\
0,&\mbox{if }t\notin{\mathcal I}\cup\{n-1\}\label{eq:DHZDalpha}.\end{cases}
\end{align}


To shorten some computations later on, we denote 
\[
F_t(a;\lambda)=\frac12\sum_{\ell\in{\mathcal I}}\sum_{j=1}^np^{t,\ell}(a_{j,\ell}\lambda_{j,\ell}-a_{j,n-1}\lambda_{j,n-1}),
\]
for $t\in{\mathcal I}$ and
\[
F(a;\lambda)=\frac14\sum_{k,\ell\in{\mathcal I}}\sum_{j=1}^np^{k,\ell}(2a_{j,n-1}\lambda_{j,n-1}-a_{j,k}\lambda_{j,k}-a_{j,\ell}\lambda_{j,\ell}),
\]
for the common factors in $\dfrac{\partial{\mathscr H}_{\mathcal I}^Z}{\partial a_{s,t}}$ and $\dfrac{\partial{\mathscr H}_{\mathcal I}^Z}{\partial\lambda_{s,t}}$, in the two corresponding non-trivial cases.

Now to compute the Poisson bracket \eqref{eq:Hamcomm} using the formulas \eqref{eq:DHemptyDa}--\eqref{eq:DHZDalpha}, we separate it into two terms:
\[
\{{\mathscr H}_{\varnothing},{\mathscr H}_{\mathcal I}^Z\}=\underbrace{\sum_{s=1}^n\sum_{t\in{\mathcal I}}\left(\frac{\partial{\mathscr H}_{\varnothing}}{\partial a_{s,t}}\frac{\partial{\mathscr H}_{\mathcal I}^Z}{\partial\lambda_{s,t}}-\frac{\partial{\mathscr H}_{\varnothing}}{\partial\lambda_{s,t}}\frac{\partial{\mathscr H}_{\mathcal I}^Z}{\partial a_{s,t}}\right)}_{\star}+
\underbrace{\sum_{s=1}^n\left(\frac{\partial{\mathscr H}_{\varnothing}}{\partial a_{s,n-1}}\frac{\partial{\mathscr H}_{\mathcal I}^Z}{\partial\lambda_{s,n-1}}-\frac{\partial{\mathscr H}_{\varnothing}}{\partial\lambda_{s,n-1}}\frac{\partial{\mathscr H}_{\mathcal I}^Z}{\partial a_{s,n-1}}\right)}_{\diamond}.
\]
For the term $\star$, we obtain
\begin{align*}
\sum_{s=1}^n\sum_{t\in{\mathcal I}}\left(\frac{\partial{\mathscr H}_{\varnothing}}{\partial a_{s,t}}\frac{\partial{\mathscr H}_{\mathcal I}^Z}{\partial\lambda_{s,t}}-\frac{\partial{\mathscr H}_{\varnothing}}{\partial\lambda_{s,t}}\frac{\partial{\mathscr H}_{\mathcal I}^Z}{\partial a_{s,t}}\right)&
=\sum_{t\in{\mathcal I}}F_t(a;\lambda)\left(\sum_{s=1}^n\sum_{{\substack{p,q=1\\p\neq q}}}^n\sum_{i=1}^na_{s,t}\lambda_{i,t}(a_{i,q}\lambda_{s,q}+a_{i,p}\lambda_{s,p})\right.\\
&\left.-\sum_{s=1}^n\sum_{{\substack{p,q=1\\p\neq q}}}^n\sum_{i=1}^na_{i,t}\lambda_{s,t}(a_{s,q}\lambda_{i,q}+a_{s,p}\lambda_{i,p})\right)=0,
\end{align*}
because the first and second terms cancel each other, as can be seen by interchanging the variables $s$ and $i$ in the second sum. Similarly for the term $\diamond$, we have
\begin{align*}
\sum_{s=1}^n\left(\frac{\partial{\mathscr H}_{\varnothing}}{\partial a_{s,n-1}}\frac{\partial{\mathscr H}_{\mathcal I}^Z}{\partial\lambda_{s,n-1}}-\frac{\partial{\mathscr H}_{\varnothing}}{\partial\lambda_{s,n-1}}\frac{\partial{\mathscr H}_{\mathcal I}^Z}{\partial a_{s,n-1}}\right)&
=F(a;\lambda)\left(\sum_{s=1}^n\sum_{{\substack{p,q=1\\p\neq q}}}^n\sum_{i=1}^na_{s,n-1}\lambda_{i,n-1}(a_{i,q}\lambda_{s,q}+a_{i,p}\lambda_{s,p})\right.\\
&\left.-\sum_{s=1}^n\sum_{{\substack{p,q=1\\p\neq q}}}^n\sum_{i=1}^na_{i,n-1}\lambda_{s,n-1}(a_{s,q}\lambda_{i,q}+a_{s,p}\lambda_{i,p})\right)=0,
\end{align*}
which vanishes using exactly the same argument as before.
\end{proof}

\begin{Ob}
As seen in \cite{GG}, a commuting relation for Hamiltonians can be successfully used to compute geodesics. The calculations in the present situation seem to be quite involved to be of interest. In future work, we will explore simplifications of the full Hamiltonian equations in order to find interesting curves in ${\rm SL}(n,{\mathbb R})$ or other real simple Lie groups.
\end{Ob}

\subsection{Solutions of \eqref{eq:Hamsys} for $n=2$}\label{ssec:n=2}

As expected, the Hamiltonian \eqref{eq:Hamcoord} takes a particularly simple form for $n=2$, namely
\[
{\mathscr H}(a;\lambda)=a_{11}a_{12}\lambda_{11}\lambda_{12}+a_{11}a_{22}\lambda_{12}\lambda_{21}+a_{12}a_{21}\lambda_{11}\lambda_{22}+a_{21}a_{22}\lambda_{21}\lambda_{22},
\]
and thus the differential equations in \eqref{eq:Hamsys} can be written as
\begin{equation}\label{eq:system_n=2}
\begin{cases}
\dot a_{11}=a_{12} M,\\
\dot a_{12}=a_{11} N,\\
\dot a_{21}=a_{22} M,\\
\dot a_{22}=a_{21} N,
\end{cases}\qquad
\begin{cases}
\dot\lambda_{11}=-\lambda_{12} N,\\
\dot\lambda_{12}=-\lambda_{11} M,\\
\dot\lambda_{21}=-\lambda_{22} N,\\
\dot\lambda_{22}=-\lambda_{21} M,
\end{cases}
\end{equation}
where $M=a_{11}\lambda_{12}+a_{21}\lambda_{22}$ and $N=a_{12}\lambda_{11}+a_{22}\lambda_{21}$.

The following five conserved quantities are obtained as immediate consequences of the system of differential equations \eqref{eq:system_n=2}
\begin{align}
&\begin{cases}
\dot a_{11}\lambda_{11}+a_{12}\dot\lambda_{12}=0\\
\dot a_{12}\lambda_{12}+a_{11}\dot\lambda_{11}=0
\end{cases}\implies
a_{11}\lambda_{11}+a_{12}\lambda_{12}=C_1,\label{eq:conserved1}\\
&\begin{cases}
\dot a_{21}\lambda_{11}+a_{22}\dot\lambda_{12}=0\\
\dot a_{22}\lambda_{12}+a_{21}\dot\lambda_{11}=0
\end{cases}\implies 
a_{21}\lambda_{11}+a_{22}\lambda_{12}=C_2,\label{eq:conserved2}\\
&\begin{cases}
\dot a_{11}\lambda_{21}+a_{12}\dot\lambda_{22}=0\\
\dot a_{12}\lambda_{22}+a_{11}\dot\lambda_{21}=0
\end{cases}\implies
a_{11}\lambda_{21}+a_{12}\lambda_{22}=C_3,\label{eq:conserved3}\\
&\begin{cases}
\dot a_{21}\lambda_{21}+a_{22}\dot\lambda_{22}=0\\
\dot a_{22}\lambda_{22}+a_{21}\dot\lambda_{21}=0
\end{cases}\implies
a_{21}\lambda_{21}+a_{22}\lambda_{22}=C_4,\label{eq:conserved4}\\
&\begin{cases}
\dot M=(a_{12}\lambda_{12}-a_{11}\lambda_{11}+a_{22}\lambda_{22}-a_{21}\lambda_{21})M\\
\dot N=-(a_{12}\lambda_{12}-a_{11}\lambda_{11}+a_{22}\lambda_{22}-a_{21}\lambda_{21})N
\end{cases}\implies
MN=C_5,\label{eq:conserved5}
\end{align}
where $C_1,\dots,C_5\in{\mathbb R}$ are constants. Note that the conserved quantities \eqref{eq:conserved1}--\eqref{eq:conserved4} become the linear system of equations
\[
\begin{pmatrix}a_{11}&a_{12}&0&0\\a_{21}&a_{22}&0&0\\0&0&a_{11}&a_{12}\\0&0&a_{21}&a_{22}\end{pmatrix}\begin{pmatrix}\lambda_{11}\\\lambda_{12}\\\lambda_{21}\\\lambda_{22}\end{pmatrix}=\begin{pmatrix}C_1\\C_2\\C_3\\C_4\end{pmatrix}
\]
which can be solved explicitly to obtain
\begin{equation}\label{eq:momenta}
(\lambda_{11},\lambda_{12},\lambda_{21},\lambda_{22})=\big(C_1a_{22}-C_2a_{12},-C_1a_{21}+C_2a_{11},C_3a_{22}-C_4a_{12},-C_3a_{21}+C_4a_{11}\big),
\end{equation}
taking into consideration that $\begin{pmatrix}a_{11}&a_{12}\\a_{21}&a_{22}\end{pmatrix}\in {\rm SL}(2,{\mathbb R})$. Replacing equality \eqref{eq:momenta} in the expressions for $M$ and $N$, we can decouple the system of differential equations \eqref{eq:system_n=2} to describe the Hamiltonian trajectories as
\begin{equation}\label{eq:systSL}
\begin{cases}
\dot a_{11}=a_{12} \big((C_4-C_1)a_{11}a_{21}+C_2a_{11}^2-C_3a_{21}^2\big),\\
\dot a_{12}=a_{11} \big((C_1-C_4)a_{12}a_{22}-C_2a_{12}^2+C_3a_{22}^2\big),\\
\dot a_{21}=a_{22} \big((C_4-C_1)a_{11}a_{21}+C_2a_{11}^2-C_3a_{21}^2\big),\\
\dot a_{22}=a_{21} \big((C_1-C_4)a_{12}a_{22}-C_2a_{12}^2+C_3a_{22}^2\big).
\end{cases}
\end{equation}
Solving explicitly this system for all cases seems to be quite complicated. Nevertheless, some special solutions can be obtained for specific values to the constants $C_1,\dots,C_4$. Two well-known families of solutions, both assuming the usual initial condition $\begin{pmatrix}a_{11}(0)&a_{12}(0)\\a_{21}(0)&a_{22}(0)\end{pmatrix}=\begin{pmatrix}1&0\\0&1\end{pmatrix}$ and also $C_1=C_4$, are given by:
\begin{description}
\item[$C_2=C_3=C\neq0$] In this case, the system \eqref{eq:systSL} becomes
\[
\begin{cases}
\dot a_{11}=Ca_{12}(a_{11}^2-a_{21}^2),\\
\dot a_{12}=-Ca_{11}(a_{12}^2-a_{22}^2),\\
\dot a_{21}=Ca_{22} (a_{11}^2-a_{21}^2),\\
\dot a_{22}=-Ca_{21} (a_{12}^2-a_{22}^2),
\end{cases}
\]
whose solutions can be found by inspection
\[
\begin{pmatrix}a_{11}(t)&a_{12}(t)\\a_{21}(t)&a_{22}(t)\end{pmatrix}=\begin{pmatrix}\cosh(Ct)&\sinh(Ct)\\\sinh(Ct)&\cosh(Ct)\end{pmatrix}.
\]
\item[$C_2=-C_3=C\neq0$] Similarly, the system \eqref{eq:systSL} becomes
\[
\begin{cases}
\dot a_{11}=Ca_{12}(a_{11}^2+a_{21}^2),\\
\dot a_{12}=-Ca_{11}(a_{12}^2+a_{22}^2),\\
\dot a_{21}=Ca_{22} (a_{11}^2+a_{21}^2),\\
\dot a_{22}=-Ca_{21} (a_{12}^2+a_{22}^2),
\end{cases}
\]
whose solutions can be found by inspection
\[
\begin{pmatrix}a_{11}(t)&a_{12}(t)\\a_{21}(t)&a_{22}(t)\end{pmatrix}=\begin{pmatrix}\cos(Ct)&-\sin(Ct)\\\sin(Ct)&\cos(Ct)\end{pmatrix}.
\]
\end{description}

Besides the previous non-trivial solutions, it occurs that when $C_2=C_3=0$ the only solution is constant. To see this, rename $C=C_4-C_1$ and thus the system \eqref{eq:systSL} is
\[
\begin{cases}
\dot a_{11}=Ca_{12}a_{11}a_{21},\\
\dot a_{12}=-Ca_{11}a_{12}a_{22},\\
\dot a_{21}=Ca_{22}a_{11}a_{21},\\
\dot a_{22}=-Ca_{21}a_{12}a_{22}.
\end{cases}\implies
\begin{cases}
a_{22}\dot a_{11}+a_{11}\dot a_{22}=0,\\a_{21}\dot a_{12}+a_{12}\dot a_{12}=0,
\end{cases}
\]
As a consequence of the initial condition, it follows that
\[
\begin{cases}
a_{11}a_{22}=1,\\a_{12}a_{21}=0,
\end{cases}\implies
\begin{cases}
\dot a_{11}=0,\\
\dot a_{12}=-Ca_{12},\\
\dot a_{21}=Ca_{21},\\
\dot a_{22}=0,
\end{cases}\implies
\begin{cases}
a_{11}(t)=a_{22}(t)=1,\\
a_{12}(t)=a_{21}(t)=0.
\end{cases}
\]

\begin{Ob}
A similar study as the present one can be done in any semi-simple real Lie group, which will, in principle, be sub-pseudo-Riemannian whenever the group is not compact. This case-by-case analysis does not seem too appealing for the authors and in the future we will be looking for a more structural approach.
\end{Ob}

\subsection*{Acknowledgments} 

The authors would like to thank Dr. Diego Lagos for several fruitful discussions at the beginning of this project.

\end{document}